\newcommand{\Mor}{\mathrm{Mor}}
\DeclarePairedDelimiter{\ceil}{\lceil}{\rceil}
\newcommand{\gr}{\mathrm{gr}}
\renewcommand{\odot}{{\scalerel*{\includegraphics{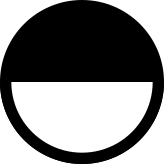}}{\bullet}}}
\newcommand{\iRes}{\scalerel*{\includegraphics[scale=0.25]{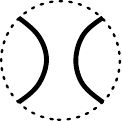}}{\bigoplus}}
\newcommand{\oRes}{\scalerel*{\rotatebox[origin=c]{-90}{\includegraphics[scale=0.25]{infTangle}}}{\bigoplus}}
\author{Mihai Marian}
\address{Department of Mathematics \\ University of British Columbia}
\email{mihmar@math.ubc.ca}
\title{A remark on the Lewark--Zibrowius invariant}
\begin{document}
\maketitle

\begin{abstract} We prove a conjecture about the concordance invariant $\vartheta$, defined in a recent paper by Lewark and Zibrowius. This result simplifies the relation between $\vartheta$ and Rasmussen's $s$-invariant. The proof relies on Bar-Natan's tangle version of Khovanov homology or, more precisely, on its distillation in the case of 4-ended tangles into the immersed curve theory of Kotelskiy--Watson--Zibrowius.
\end{abstract}

\section{Introduction}

Lewark and Zibrowius define two new families of smooth concordance invariants,
\[\{\vartheta_c \co \mc{C}_\text{sm} \ra \Z\} \quad \text{and} \quad \{\vartheta'_c \co \mc{C}_\text{sm} \ra \Z \cup \{\infty\} \},\] parametrized by a prime $c$ in \cite{LZ24}. These invariants exploit the following linearity property of Rasmussen's invariant in characteristic $c$. Given a knot $K \subset S^3$ and a pattern $P \subset D^2 \times S^1$ of wrapping number 2, the function
\[t \mapsto s_c(P_t(K))\] 
is the restriction to $\Z$ of a piecewise affine function $\R \ra \R$ of slope 1 or 0 that has at most one jump discontinuity. If the winding number of $P$ is $\pm2$ then the function has slope 1, otherwise the winding number and slope are 0 and, in this latter case, the function does have a jump discontinuity. In the case of winding number $\pm2$, the invariant $\vartheta'_c(K)$ is defined to be the value of $t$ for which 
\[s_c(P_{\vartheta_c'(K)}(K)) = s_c(P_{\vartheta_c'(K)-1}(K)),\]
if it exists. If no such value exists because the piecewise affine function is affine, then $\vartheta'_c(K) := \infty$. Not only do Lewark--Zibrowius prove that $\vartheta_c$ and $\vartheta'_c$ are concordance invariants and that $\vartheta_c$ is a homomorphism $\mc{C}_\text{sm} \ra \Z$, but they also show that $\vartheta_c$ is a genuinely new invariant, in that it is not simply a multiple of $s_c$, in contrast to the $\tau$-invariant \cite[\S2.2]{LZ24}.

The knots $K$ with $\vartheta'_c(K) \neq \infty$ are of particular interest, and they are called $\vartheta_c$-rational. We establish here a conjecture on the expected simplicity of $\vartheta'_c$:

\begin{thm}[\!\!\protect{\cite[Conjecture 2.24]{LZ24}}] \label{conj} If $K$ is a $\vartheta_c$-rational knot, then $\vartheta_c(K) = 0$.
\end{thm}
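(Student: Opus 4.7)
The plan is to work within the Kotelskiy--Watson--Zibrowius immersed-curve description of Bar-Natan's Khovanov homology of 4-ended tangles. First, I would decompose $K$ as $T_1 \cup T_2$ for two 4-ended tangles $T_i$, so that the Bar-Natan invariants of the $T_i$ in characteristic $c$ are represented by multicurves on the 4-punctured sphere, possibly decorated with local systems. The Bar-Natan homology of $K$ is then recovered as the Lagrangian Floer homology of these two multicurves, and in particular $s_c(K)$ can be read off from this pairing.

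Under this correspondence, I expect $\vartheta'_c(K)$ to be identified with a slope-type invariant that is finite precisely when the two multicurves share a ``rational'' component in a controlled way, while $\vartheta_c(K)$ records an accompanying height or $\delta$-grading offset. In this language, $\vartheta_c$-rationality of $K$ is exactly the existence of such a rational matching, and the assertion to prove is that this matching already forces the height parameter to vanish.

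After translating the hypothesis into this framework, I would isolate the matching rational components and normalise them by an isotopy of the 4-punctured sphere, reducing the question to a computation of the Floer pairing of two parallel rational curves. For parallel rational curves the generators of Bar-Natan homology line up in complementary pairs, and a symmetry of the pairing---coming from the fact that two parallel rational curves on the 4-punctured sphere cobound an annulus---should force the contributions to $\vartheta_c$ to cancel identically.

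The main obstacle I anticipate is to make the correspondence between the abstract Lewark--Zibrowius definition and its immersed-curve manifestation precise; in particular, identifying $\vartheta_c$ itself as a height parameter, rather than as a mere byproduct of the comparison with $s_c$, may require a careful repackaging of the material of \cite{LewZib22}. Once that is in place, the cancellation argument itself should be routine. A secondary difficulty is the case $c = 2$, where nontrivial local systems complicate the classification of rational matchings and will likely need a separate, though analogous, treatment.
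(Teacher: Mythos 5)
Your proposal doesn't reach the key idea, and the framing has a structural misconception at the outset. You propose to decompose $K$ into two 4-ended tangles $T_1 \cup T_2$, but the Lewark--Zibrowius invariant $\vartheta_c(K)$ is not read off from such a splitting of $K$; it is read off from the immersed-curve invariant of the \emph{tangle double} $T_K$ of $K$ (the long knot together with its Seifert push-off). Concretely, \cite[Corollary 6.14]{LewZib22} identifies $\vartheta_c(K)$ with $\lceil\sigma_c\rceil$, where $\sigma_c$ is the slope of the non-compact component $\wt{\BN}_a(T_K;\F_c)$ near one tangle end, and \cite[Proposition 6.18]{LewZib22} says that $\vartheta_c$-rationality forces this component to be the curve of a rational tangle $Q_n$ for some even $n$. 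So $\vartheta_c(K) = n$, and the entire content of the theorem is that $n = 0$. Your description of $\vartheta_c$ as a ``height or $\delta$-grading offset'' separate from the slope is not correct: the slope \emph{is} $\vartheta_c$.

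The argument you then sketch---normalize two parallel rational curves, note they cobound an annulus, invoke a symmetry of the Floer pairing---does not supply the constraint $n = 0$; the pairing of two parallel $Q_n$-curves is perfectly well-defined and nonzero for any $n$, so there is no cancellation to find there. The missing ingredient is a grading constraint coming from Lee's deformation. The mechanism is this: because $T_K$ is built from a Seifert push-off, the $0$-closure $T_K(0)$ is a 2-component link with linking number $0$. A standard consequence of Lee's theorem (stated in the paper as \cref{lem:towHomGrad}) is that for a 2-component link with linking number zero, both $\F_c[H]$-towers in reduced Bar-Natan homology lie in homological grading $0$. On the other hand, applying the pairing theorem to $T_K(0) = \mc{L}(\oRes, T_K)$ and substituting $\wt{\BN}_a(T_K) = \wt{\BN}(Q_n)$, the tower part of $\wt{\BN}(T_K(0))$ is (up to a uniform grading shift) that of the torus link $T(2,2n)$, whose towers sit in homological gradings $0$ and $2n$. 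These two facts together force $n = 0$. This linking-number/Lee-grading step is what actually drives the proof, and it is absent from your outline. Finally, your worry about nontrivial local systems in characteristic $2$ is a red herring here: such local systems only arise on compact components, whereas the component carrying $\vartheta_c$ is non-compact.
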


Since $\vartheta_c$ agrees with $\vartheta_c'$ on the class of $\vartheta_c$-rational knots \cite[Theorem 2.23]{LZ24}, it follows that the second family of invariants $\{\vartheta'_c\}$ contains no more information than a single $\Z/2\Z$-valued invariant. A consequence noted by Lewark--Zibrowius in \cite[p. 250]{LZ24} is the following simplification of their Theorem 2.23:

\begin{cor} Let $K \subset S^3$ be a $\vartheta_c$-rational knot and let $P$ be a pattern with wrapping number 2 and winding number $\pm 2$. Then
	\[s_c(P(K)) = s_c(P_{-\vartheta_c(K)}(U)) = s_c(P_0(U)).\] \qed
\end{cor}

Our argument uses the immersed curve theory of 4-ended tangles, constructed in \cite{KWZ19} as a specialization of the theory developed in \cite{BN05}, and a property of Lee's homology \cite{Lee05}.

\textbf{Acknowledgment.} I extend my gratitude to Claudius Zibrowius and Liam Watson for generously sharing their feedback and suggestions.

\section{Background}\label{sec:background}

Tangles are considered modulo isotopy fixing the endpoints. Let $(K, *)$ be a pointed oriented knot and let $T_K$ be the 4-ended tangle obtained by taking a copy of the long knot $K \setminus \{*\}$ together with its Seifert push-off, as in \cref{fig:figEightDouble}. We generally also orient our tangles and mark an endpoint, as required for the theory in \cite{KWZ19}.

\begin{figure}[h]
	\centering
	\includegraphics[height=0.15\textheight]{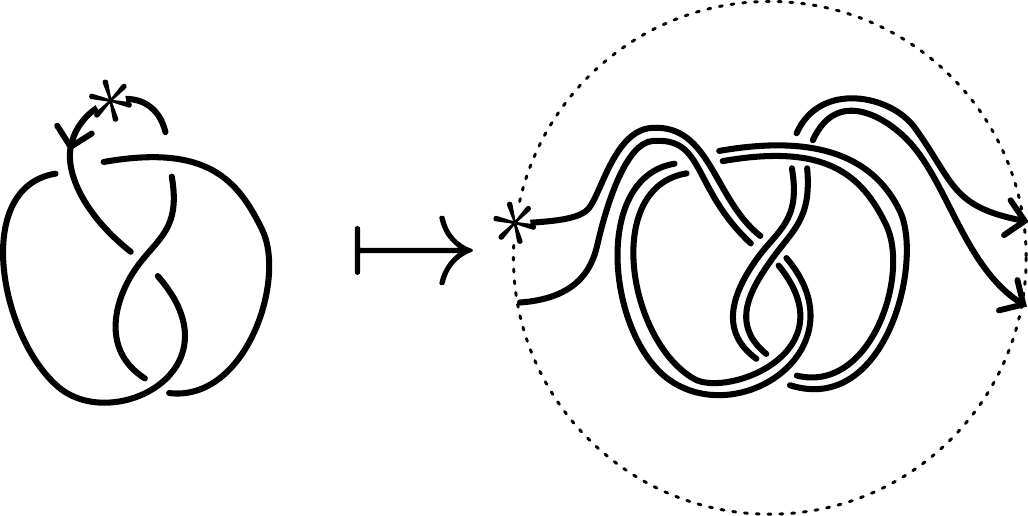}
	\caption{A pointed oriented knot $(K, \ast)$ and its associated double $T_K$.}
	\label{fig:figEightDouble}
\end{figure}

To specify notation for the cut-and-paste procedures used, let $n \in \Z \cup \{\infty\}$. First, the rational $n$-tangle $Q_n$ is the one in \cref{fig:nTangle} for $n>0$. If $n<0$, then $Q_n = mQ_{-n}$, where $m$ denotes the mirror. And if $n = 0, \infty$, we set $Q_0 = \oRes$ and $Q_\infty = \iRes$. Second, given two 4-ended tangles $T_1$ and $T_2$, the link $\mc{L}(T_1, T_2)$ is obtained by identifying endpoints as in \cref{fig:glue} below. Finally, let the $n$-closure $T(n)$ of a 4-ended tangle $T$ be $\mc{L}(T, Q_{-n})$.
By convention, diagrams for the tangle $T_K$ are chosen so that their $\infty$-closure is the unknot, and the tangle is oriented compatibly with the 0-closure, as in \cref{fig:figEightDouble}.

\begin{figure}[ht]
	\centering
	\begin{minipage}{.5\textwidth}
		\centering
		\includegraphics[height = 0.1\textheight]{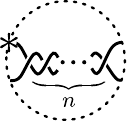}
		\caption{The tangle $Q_n$} \label{fig:nTangle}
	\end{minipage}%
	\begin{minipage}{.5\textwidth}
		\centering
		\includegraphics[height=0.1\textheight]{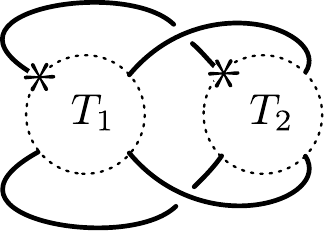}
		\caption{The link $\mc{L}(T_1, T_2)$.}	
		\label{fig:glue}
	\end{minipage}
\end{figure}

\subsection{Bar-Natan homology}

The Bar-Natan homology of a link is a version of Khovanov homology \cite{Kho00} defined in \cite{BN05} with coefficients in the field with two elements $\F_2$, and later extended as a theory with coefficients in any prime field in \cite{MTV07}. It has been observed that varying the field characteristic results in interesting differences \cite{LZ21}, so let $\F_c$ be the prime field of characteristic $c$ (in particular, $\F_0 = \Q$). We use the set-up in \cite[\S3]{KWZ19}.

Given a link $L$, its Bar-Natan homology is a bigraded $\F_c[H]$-module $\BN(L; \F_c)$, where $H$ is a formal variable that lowers the secondary (quantum) grading by $2$. The shift operators for the homological and quantum gradings are denoted using square and curly brackets, respectively. For example,
\[\BN(L; \F_c) \{-1\}\]
is the Bar-Natan homology of $L$ with coefficients in $\F_c$, but with quantum gradings formally reduced by 1.

If the link $L$ is pointed, then there is a reduced theory $\wt{\BN}(L; \F_c)$, which is related to unreduced Bar-Natan homology by a short exact sequence of bigraded $\F_c[H]$-complexes:
\begin{equation} \label{eq:redBNLES}\begin{tikzcd}
	0 \rar		&\wt{\CBN}(D; \F_c)\{-1\} \rar		&\CBN(D; \F_c) \rar	& \wt{\CBN}(D;\F_c)\{1\} \rar		& 0,
\end{tikzcd}\end{equation}
where $D$ is a choice of diagram for $L$.

\begin{notn} Free summands of the bigraded $\F_c[H]$-module $\wt{\BN}(L; \F_c)$ are called towers. The grading of a tower refers to the grading of a corresponding free generator.
\end{notn}

\subsection{Lee's deformation}\label{sec:LeeBN}

In \cite{Ras10}, Rasmussen uses the work in \cite{Lee05} to define the $s$-invariant of a knot. While the $s$-invariant can also be defined for links, as in \cite{BW08} and \cite{Par12}, this construction is not used as much, and Lewark--Zibrowius arrange so that their work only deals with $s$-invariants of knots. This subsection recalls an aspect of the definition of the $s$-invariant for links in \Cref{lem:towHomGrad} below. This result is known to the experts and is the main observation needed to prove \cref{conj}. See also \cite[Proposition 4.3]{Lee05}.

\begin{lem}\label{lem:towHomGrad} Let $L$ be an oriented 2-component pointed link.  If $\lk(L) \neq 0$, then there is a unique tower $\F_c[H] \hookrightarrow \wt{\BN}(L;\F_c)$ in homological grading 0. Otherwise, if $\lk(L) = 0$, then both towers have homological grading 0. 
\end{lem}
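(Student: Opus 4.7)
The plan is to identify the two $\F_c[H]$-towers of $\wt{\BN}(L;\F_c)$ with canonical orientation generators of Lee-type homology, and then read off their homological gradings from a formula of Lee.

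First, I would verify that the $\F_c[H]$-rank of $\wt{\BN}(L;\F_c)$ --- that is, the number of towers --- equals $2^{n-1}$ for any $n$-component link $L$, so exactly two in our case. This follows by inverting $H$: the localization $\wt{\BN}(L;\F_c)\otimes_{\F_c[H]}\F_c[H,H^{-1}]$ is canonically identified with a reduced version of Lee-type homology, whose total dimension over $\F_c(H)$ is $2^{n-1}$ by \cite{Lee05}.

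Next, I would invoke Lee's canonical generators. To each of the $2^n$ orientations $o$ of $L$, Lee associates a cycle $\mathfrak{s}_o$ that survives to a non-zero class in the localization; by \cite[Proposition 4.3]{Lee05}, its homological grading is
$$h(\mathfrak{s}_o) \;=\; 2\,\lk(L_E, L\setminus L_E),$$
where $E\subset\{K_1,K_2\}$ is the set of components whose orientation in $o$ differs from the starting one, and linking numbers are taken in the starting orientation. Global orientation reversal $o\mapsto\bar o$ preserves this grading and pairs $\mathfrak{s}_o$ with $\mathfrak{s}_{\bar o}$ into a single tower of the reduced theory, so the $2^n$ Lee generators correspond to $2^{n-1}$ towers, matching the count above.

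Applied to $L=K_1\cup K_2$ with $\ell:=\lk(K_1,K_2)$, the four orientations yield homological gradings $0,\;2\ell,\;2\ell,\;0$ for $E=\emptyset,\{K_1\},\{K_2\},\{K_1,K_2\}$ respectively (the sign on the middle two depending on conventions). After pairing by global reversal, the two towers of $\wt{\BN}(L;\F_c)$ lie in homological gradings $0$ and $2\ell$, so if $\ell\neq 0$ there is a unique tower in grading $0$, while if $\ell=0$ both towers sit there. The main obstacle I anticipate is the passage between Lee's and Bar-Natan's deformations: I need to confirm that the canonical Lee cycles correspond to generators of the $\F_c[H]$-towers in $\wt{\BN}$ rather than of torsion summands, and that the grading formula transfers cleanly. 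This should follow from the isomorphism of the two deformations after inverting $H$ (valid for $c\neq 2$, with an alternative direct argument needed in characteristic $2$) combined with the short exact sequence \eqref{eq:redBNLES}.
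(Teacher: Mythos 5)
Your outline follows the same route as the paper --- count the towers via localization, label the Lee-type generators by orientations, read off the homological grading shift from the oriented resolutions, and pass to the reduced theory via the exact sequence \eqref{eq:redBNLES} --- and it arrives at the right answer, including the grading computation $2\lk(L)$ from the change in the negative crossing count. But the gap you flag at the end is a real one, and it is exactly the point the paper resolves. You propose to compare Lee's deformation with Bar-Natan's after inverting $H$, note this fails in characteristic $2$, and defer the fix. The paper avoids the comparison entirely: it works directly with the filtered complex $\mathit{fCBN}(L;\F_c)$ obtained by setting $H=1$ in the Bar-Natan complex (not with Lee's complex), and invokes \cite[Theorem 2.2]{LS14}, which shows that Lee's argument applies verbatim to this Bar-Natan specialization over \emph{any} prime field $\F_c$, including $\F_2$. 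That theorem hands you the four orientation-labelled generators of $H_*(\mathit{fCBN}(L;\F_c))$ in one stroke; then \cite[Proposition 3.8]{KWZ19}, via the spectral sequence induced by the quantum filtration, identifies these generators with the $\F_c[H]$-towers of $\BN(L;\F_c)$ rather than torsion --- which is the other point you said you needed to confirm.

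So the missing idea is specifically this: do not pass through Lee's two-variable deformation at all. Work with the $H=1$ filtered Bar-Natan theory, where Lipshitz--Sarkar have already verified that the orientation generators persist in every characteristic, and then the grading claim reduces to the elementary count $n_-(\fk{o}_1)=n_-(\fk{o}_0)+2\lk(L)$ that you already wrote down. With that substitution your argument closes; without it, the characteristic-$2$ case remains open.
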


\begin{proof} The idea is that, by setting $H=1$ in the chain complex $\CBN(L; \F_c)$, we obtain a chain complex $\mathit{fCBN}(L; \F_c)$ that is no longer bigraded, but rather homologically graded and quantum filtered. Courtesy of the filtration, there is an induced spectral sequence
\[\mathit{fCBN}(L; \F_c) \rightrightarrows H_*(\mathit{fCBN}(L; \F_c)).\]
Theorem 2.2 of \cite{LS14} shows that the vector space $H_*(\mathit{fCBN}(L; \F_c))$ is $4$-dimensional, and there is a canonical identification between the set of orientations on $L$ and a set of generators of $H_*(\mathit{fCBN}(L; \F_c))$. To understand this identification, note that each orientation on $L$ determines an oriented resolution of a diagram for $L$. Lee's argument applies in this context to show that each generator of $H_*(\mathit{fCBN}(L; \F_c))$ is the homology class of an algebra element assigned to an oriented resolution of $L$ by the TQFT defining $\mathit{fCBN}$; see \cite[Theorem 4.2]{Lee05} or \cite[\S2.4]{Ras10} for the construction and \cite[Theorem 2.2]{LS14} for the applicability of Lee's work in this slightly different context.

Now, as explained in \cite[Proposition 3.8]{KWZ19}, the components of the differential $\del_{\CBN(L)}$ that are given by $1 \mapsto H^l$ induce differentials on the $l^{\text{th}}$ page of the spectral sequence above, and this implies that
\[\BN(L; \F_c) \iso (\F_c[H])^{\oplus 4} \oplus \mathrm{Tors},\]
where the towers in ${\BN}(L; \F_c)$ correspond to the generators of $H_*(\mathit{fCBN}(L; \F_c)$. Moreover it follows from the short exact sequence \cref{eq:redBNLES} that there is a 2-to-1 correspondence that preserves homological grading between the towers of $\BN(L)$ and the towers of $\wt{\BN}(L)$.

Finally, fix an oriented diagram $(D, \fk{o}_0)$ for $L$, where $\fk{o}_0$ is the orientation on $D$ induced from $L$. Let $n_+(\fk{o}_0)$ and $n_-(\fk{o}_0)$ be the number of positive and negative crossings in $(D, \fk{o}_0)$. Pick a component $K$ of $L$ and let $\fk{o}_1$ be the orientation on $D$ which is obtained by reversing the orientation on $K$. Then the number of negative crossings in $(D, \fk{o}_1)$ is
	\[n_-(\fk{o}_1) = n_-(\fk{o}_0) + 2 \lk(L) .\]
It follows that, while the oriented resolution of $(D, \fk{o}_0)$ lies in homological grading 0, the $\fk{o}_1$-oriented resolution $D^{\fk{o}_1}$ lies in homological grading $2\lk(L)$.
\end{proof}

\subsection{The immersed curve theory}

In \cite{KWZ19}, two equivalent invariants of pointed 4-ended oriented tangles are defined:
\[\begin{split}
	T	& \mapsto \Rd(T; \F_c) \in \cat{Mod}^\mc{B}\\
	T	& \mapsto \wt{\BN}(T; \F_c) \in \cat{Fuk}(S^2_{4, *}).
\end{split}\]
The first produces type D structures over the Bar-Natan algebra $\mc{B}$, which we will describe in \cref{sec:epilogue}. The second lands in the (partially wrapped) Fukaya category of $S^2$, punctured at four points, one of which is marked $\ast$. In other words, $\wt{\BN}(T; \F_c)$ is an immersed curve in $S^2_{4,\ast}$, possibly carrying a non-trivial local system. This possibility does not occur for non-compact curves, which are the only curves of interest in what follows. Moreover, the invariants are bigraded in an appropriate sense. Our main tool is the following pairing theorem.

\begin{thm}[\!\! {\cite[Theorem 7.2]{KWZ19}}]\label{thm:lagPair} Let $T_1$ and $T_2$ be two pointed 4-ended tangles, and let $L = \mc{L}(T_1,T_2)$. Then the Bar-Natan homology is isomorphic to the wrapped Lagrangian intersection Floer homology of the tangle invariants, as bigraded $\F_c[H]$-modules:
	\[\wt{\BN}(L; \F_c)\{-1\} \iso \HF(\wt{\BN}(mT_1; \F_c), \wt{\BN}(T_2; \F_c)).\]
\end{thm}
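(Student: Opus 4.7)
The pairing theorem splits naturally into an algebraic step and a geometric step. In the algebraic step one relates Bar-Natan's chain-level tangle invariants to the gluing operation $\mc{L}(\cdot, \cdot)$ via a box tensor product over the Bar-Natan algebra $\mc{B}$. In the geometric step one invokes the classification of type D structures over $\mc{B}$ as immersed curves, under which morphism spaces of type D structures are identified with Lagrangian Floer homology in the 4-punctured sphere $S^2_{4,\ast}$. Composing the two identifications yields the stated isomorphism.

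For the algebraic step, I would first recall that Bar-Natan's formalism assigns to a 4-ended tangle $T$ a chain complex in a cobordism category indexed by the two crossingless matchings of its four boundary points. This cobordism-theoretic complex refines to a type D structure $\Rd(T; \F_c)$ over $\mc{B}$, whose two idempotents track the boundary resolution and whose differentials encode internal crossings together with saddle cobordisms between the two boundary resolutions. Gluing tangles to form $L = \mc{L}(T_1, T_2)$ corresponds on the algebraic side to pairing the type D structure of $T_2$ with the dual (type A) structure of $T_1$ --- equivalently, with the type D structure of the mirror $mT_1$ --- via the $\mc{B}$-module box tensor product. This yields a chain-level identification
\[\wt{\CBN}(L; \F_c)\{-1\} \simeq \Rd(mT_1; \F_c) \boxtimes_{\mc{B}} \Rd(T_2; \F_c),\]
with the $\{-1\}$ shift arising from reconciling the pairing and reduced-theory grading conventions.

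For the geometric step, I would appeal to the classification result of \cite{KWZ19}: after a homotopy reduction, every type D structure over $\mc{B}$ arising from a 4-ended tangle is a direct sum of an immersed curve (possibly carrying a local system) and a contractible summand. The resulting functor into the partially wrapped Fukaya category of $S^2_{4,\ast}$ sends $\Rd(T; \F_c)$ to the curve $\wt{\BN}(T; \F_c)$, and the crucial property is that the morphism space $\Mor_{\mc{B}}(\Rd(mT_1), \Rd(T_2))$ is naturally identified with the wrapped Lagrangian Floer complex of the two corresponding curves, with generators being intersection points and the differential counting embedded bigons in $S^2_{4,\ast}$. Chaining this identification with the algebraic one gives the desired bigraded $\F_c[H]$-module isomorphism.

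The main obstacle is the geometric step: one must both establish the curve classification for type D structures over $\mc{B}$ and exhibit the Floer-theoretic interpretation of their morphisms. The former requires reducing an arbitrary type D structure over $\mc{B}$ to a canonical ``loop'' form using the specific generators of $\mc{B}$, and the latter requires that the algebraic morphism complex, built from $\mc{B}$ and its $A_\infty$-bimodule structure, coincides combinatorially with the count of immersed bigons in $S^2_{4,\ast}$, with bigradings matching on both sides.
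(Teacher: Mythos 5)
This theorem is not proved in the paper: it is cited wholesale as \cite[Theorem 7.2]{KWZ19}, so there is no internal proof to compare your sketch against. The Epilogue (\cref{sec:epilogue}) does, however, record the algebraic half of the argument as \cref{thm:hom}, namely $\wt{\CBN}(\mc{L};\k)\{-1\} \htp \Mor(\Rd(mT_1;\k), \Rd(T_2;\k))$, which is what your ``algebraic step'' is after. One mismatch there: writing $\Rd(mT_1;\F_c) \boxtimes_{\mc{B}} \Rd(T_2;\F_c)$ is a type error --- the box tensor takes a type~A module on one side and a type~D structure on the other, not two type~D structures --- and the formulation actually used in \cite{KWZ19} and quoted in this paper is the internal Hom $\Mor(\Rd(mT_1), \Rd(T_2))$, with the mirror on the first slot doing the work of dualization. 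Your prose acknowledges the A/D distinction but the displayed formula does not; you should either write the pairing as $\Mor(\Rd(mT_1),\Rd(T_2))$ or make the passage to a type~A structure explicit before boxing.

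Your ``geometric step'' correctly names the remaining content of \cite[\S\S 6--7]{KWZ19}: the classification of type~D structures over $\mc{B}$ by immersed curves with local systems in $S^2_{4,\ast}$, and the identification of $H_*(\Mor(-,-))$ with wrapped Lagrangian Floer homology, with gradings matching. Neither of these is established in the present paper (nor in your proposal, which only gestures at them), so as a proof the proposal is incomplete; as a roadmap to the proof given in \cite{KWZ19} it is a reasonable high-level outline, modulo the $\boxtimes$ versus $\Mor$ slip.
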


\section{The proof of \Cref{conj}}\label{sec:proof}

Suppose now that $K$ is a $\vartheta_c$-rational knot. Work of Lewark--Zibrowius identifies $\vartheta_c(K)$ with a certain slope of $\wt{\BN}(T_K; \F_c)$, and this allows us to reduce the proof to a simple statement that can be checked using \Cref{lem:towHomGrad}. Let $\wt{\BN}_a(T; \F_c)$ consist of the non-compact component(s) of $\wt{\BN}(T; \F_c)$.

\begin{prop}[\!\! {\cite[Proposition 6.18]{LZ24}}] \label{prop:ratCurveChar} If $K$ is $\vartheta_c$-rational, then the immersed curve $\wt{\BN}_a(T_K;\F_c)$ is equal to the immersed curve of the rational tangle $Q_n$, for some choice of $n \in 2\Z$, up to some grading shift.
\end{prop}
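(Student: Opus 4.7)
The plan is to combine the classification of immersed curves from \cite{KWZ19} with the Lewark--Zibrowius definition of $\vartheta_c$-rationality to pin down the isotopy class of $\wt{\BN}_a(T_K;\F_c)$, and then exploit the structure of $T_K$ as the Seifert double of $K$ to constrain the resulting slope to an even integer. The key structural input is the following general fact about the immersed curve theory: $\wt{\BN}(T;\F_c)$ decomposes in $S^2_{4,\ast}$ into closed components (possibly carrying local systems) and non-compact arcs, and each non-compact arc is isotopic, up to grading shift, to the curve of some rational tangle $Q_n$ with $n \in \Z \cup \{\infty\}$.

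Granting this, I would next unwind the definition of $\vartheta_c'$ from \cite{LewZib22}. That invariant is a slope-type reading of $\wt{\BN}_a(T_K;\F_c)$ which takes the value $\infty$ precisely when the slope is not well-defined --- for example, when several non-compact arcs of distinct slopes are present, or when the combinatorial data attached to a single arc is obstructed. The hypothesis $\vartheta_c'(K) \neq \infty$ therefore forces $\wt{\BN}_a(T_K;\F_c)$ to consist of a single arc, which by the structural fact above must be isotopic (up to grading shift) to $Q_n$ for some $n \in \Z \cup \{\infty\}$.

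It remains to show that $n$ is an even integer. For this I would use the specific geometry of the Seifert doubling $K \mapsto T_K$: the two strands of $T_K$ are parallel copies of $K$ cobounding an embedded annulus, so their induced orientations pair the four boundary points exactly as they are paired in $Q_0$. This pairing condition excludes $n = \infty$ and yields $n \in \Z$. The parity should then come from the following ``factor of two'' in the doubling construction: a single Dehn twist applied to $T_K$ corresponds to changing the framing of $K$ by $\pm 1$, but the slope of $\wt{\BN}_a(T_K;\F_c)$ in the KWZ convention should record twice the framing shift, since both parallel strands contribute. Consequently any admissible slope for a doubled tangle is even. The main obstacle will be promoting this heuristic to a rigorous parity argument, which requires careful bookkeeping of the grading, orientation, and marked-point conventions of \cite{KWZ19} against the geometry of $T_K$; checking the edge case $K = U$ (so that $T_U = Q_0$, giving $n = 0$) is a useful sanity check but not a substitute for the full argument.
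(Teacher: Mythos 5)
This proposition is cited from \cite[Proposition 6.18]{LewZib22} and is not proved in the present paper; there is therefore no in-paper argument to compare against, and I will instead assess your sketch on its own terms.

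The central problem is your ``key structural input.'' You assert that for an arbitrary $4$-ended tangle $T$, every non-compact component of $\wt{\BN}(T;\F_c)$ is already isotopic, up to grading shift, to some $\wt{\BN}(Q_n)$ with $n\in\Z\cup\{\infty\}$. If that were true, the $\vartheta_c$-rationality hypothesis would be doing essentially no work in the proposition, and every knot would be $\vartheta_c$-rational. Neither is the case: the non-compact component of $\wt{\BN}(T_K;\F_c)$ is a single immersed arc which may wrap around the punctures in a complicated (non-linear) way, and the point of the rationality hypothesis is precisely to rule this out. In the same vein, your reading of $\vartheta_c'(K)=\infty$ as reflecting ``several non-compact arcs of distinct slopes'' does not match the set-up: $\wt{\BN}_a(T_K;\F_c)$ is always a single non-compact curve, and the dichotomy between finite and infinite $\vartheta_c'$ concerns the shape (constant slope versus wrapping) of that one curve, not its number of components.

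Your parity discussion is also on shaky footing. You use the pairing-of-endpoints argument only to exclude $n=\infty$ and conclude $n\in\Z$, and then invoke a separate ``factor of two from doubling'' heuristic to get evenness. In fact the pairing constraint already does all the work: the arc of $\wt{\BN}(Q_n)$ connects the tangle ends the same way as $Q_0$ exactly when $n$ is even, the same way as $Q_1$ when $n$ is odd, and the same way as $Q_\infty$ when $n=\infty$; since $T_K$ connects its ends as $Q_0$ does, matching connectivity forces $n\in2\Z$ directly. By contrast the framing heuristic, as you yourself acknowledge, is not an argument: the slope conventions in \cite{KWZ19} are not defined by ``recording twice a framing shift,'' and there is no clean statement of that kind to rigorize. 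To make this proposition precise you would need to actually open up the definition of $\vartheta_c'$ in \cite{LewZib22} (which is stated via the type D structure $\Rd(T_K;\F_c)$, not via an informal slope-reading), show that finiteness of $\vartheta_c'$ forces the arc to be linear, i.e. equal to $\wt{\BN}(Q_{p/q})$, and then combine the puncture-connectivity constraint with the constraints coming from the idempotents at the ends to force $p/q$ to be an even integer rather than a general rational.
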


We have then  $\wt{\BN}_a(T_K; \F_c) = \wt{\BN}(Q_n; \F_c)$, for some $n \in 2\Z$, up to grading shift. The immersed curve invariants $\wt{\BN}(Q_n; \F_c)$ are calculated in \cite{KWZ19}. It turns out that they are independent of the coefficient field, so we may drop it from the notation. These invariants are best described in the following covering space of the 4-punctured sphere:
\[\R^2\setminus ({\scalebox{1}{$\frac{1}{2}$}} \Z)^2 \xra{\alpha} T^2_{4,*} \xra{\beta} S^2_{4,*},\]
where $\beta$ is the double cover given by hyperelliptic involution and $\alpha$ is the universal Abelian cover of the punctured torus. The puncture $\ast$ lifts to the integer lattice $\Z^2 \subset \frac{1}{2}\Z^2$. The lift of $\wt{\BN}(Q_n)$ is (isotopic to) a line of slope $n$, as depicted in \cref{fig:curveLifts} in the cases $n = -2, 0, 2$:

\begin{figure}[h]
\centering
\includegraphics[width=0.58\textwidth]{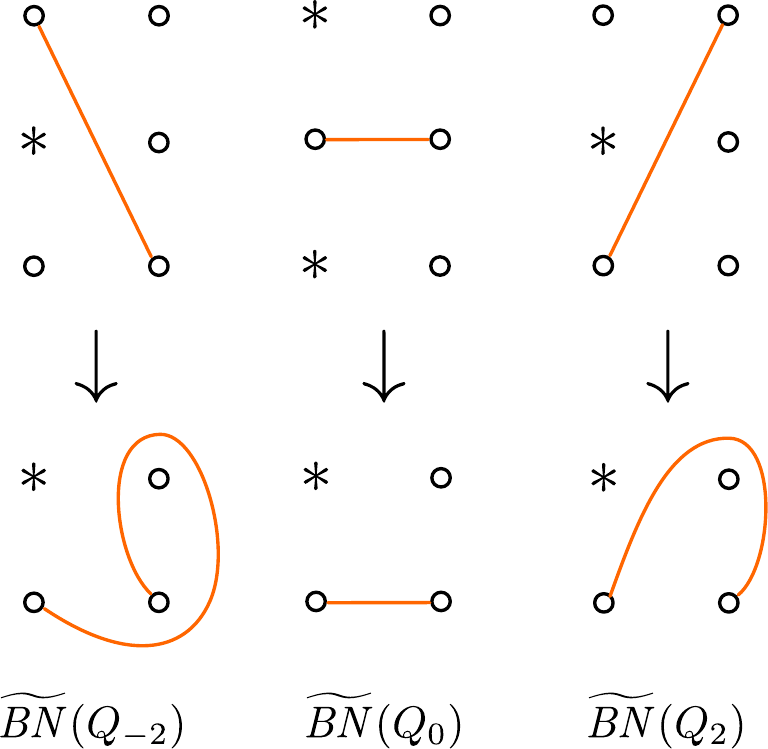}
\caption{Some immersed curve invariants of $Q_n$ and their lifts to the covering space $\R^2\setminus\Z^2$.}
\label{fig:curveLifts}
\end{figure}

\begin{prop}[\!\! {\cite[Corollary 6.14]{LZ24}}] \label{prop:slope} Given a knot $K \subset S^3$, let $\sigma_c$ be the slope of $\wt{\BN}_a(T_K; \F_c)$ near the bottom-right tangle end. Then $\vartheta_c(K) = \ceil{\sigma_c}$.
\end{prop}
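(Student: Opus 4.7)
The plan is to unwind the Lewark--Zibrowius definition of $\vartheta_c(K)$ and read it off directly from the geometry of $\wt{\BN}_a(T_K;\F_c)$ in the covering space $\R^2 \setminus (\tfrac{1}{2}\Z)^2$, with the pairing theorem \Cref{thm:lagPair} as the central tool.

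The first step is to translate $\vartheta_c(K)$ into a statement about a family of link invariants obtained by closing $T_K$ against rational tangles. The invariant $\vartheta_c$ is characterized in \cite{LewZib22} as a threshold in the $s_c$-invariants of the closures $T_K(n)=\mc{L}(T_K,Q_{-n})$ (and the relevant cabled satellites). By \Cref{thm:lagPair}, the Bar-Natan homology of each such closure is isomorphic, as a bigraded $\F_c[H]$-module, to the wrapped Lagrangian Floer homology of $\wt{\BN}(mT_K;\F_c)$ and $\wt{\BN}(Q_{-n};\F_c)$, so the question reduces to one about graded intersections between two immersed curves in $S^2_{4,\ast}$.

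Next, pass to the cover $\R^2 \setminus (\tfrac{1}{2}\Z)^2$. Each lift of $\wt{\BN}(Q_{-n};\F_c)$ is a straight line of slope $-n$, while each non-compact end of $\wt{\BN}_a(T_K;\F_c)$ is asymptotic to a line of slope $\sigma_c$. The graded intersection count near the bottom-right tangle end is then a piecewise constant function of $n$ whose jumps occur precisely at the integer values of $n$ for which a slope-$-n$ line meets the asymptotic direction of $\wt{\BN}_a(T_K;\F_c)$ in a degenerate way. Matching this jump locus with the threshold appearing in the Lewark--Zibrowius definition of $\vartheta_c$ yields the formula $\vartheta_c(K) = \ceil*{\sigma_c}$.

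The main obstacle is the bookkeeping that distinguishes $\ceil*{\sigma_c}$ from $\lfloor \sigma_c \rfloor$: this asymmetry is forced by the orientation and grading conventions at the marked puncture $\ast$, together with the choice of the ``bottom-right'' end in the cover. A secondary subtlety is that $\wt{\BN}_a(T_K;\F_c)$ need not lift to an honest straight line, only to a curve asymptotic to one, so one must check that the answer is detected purely by the asymptotic slope and is insensitive to the finer combinatorial structure of the curve away from infinity.
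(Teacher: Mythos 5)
The paper does not prove \Cref{prop:slope}: it is quoted verbatim from Lewark--Zibrowius \cite[Corollary 6.14]{LewZib22} and used as a black box. There is therefore no in-paper proof against which to compare your attempt.

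As a standalone sketch, your strategy points in the right direction — the definition of $\vartheta_c$ in \cite{LewZib22} is indeed formulated via $s_c$-invariants of links obtained by gluing $T_K$ to rational tangles, and \Cref{thm:lagPair} is the natural device for converting such computations into graded Floer-theoretic statements about the immersed curve $\wt{\BN}(T_K;\F_c)$. But the outline leaves the load-bearing steps unjustified. You do not state the actual Lewark--Zibrowius definition of $\vartheta_c$, so the ``threshold'' you invoke is not pinned down; the $s_c$-invariant is not an intersection count but a particular quantum filtration level of a distinguished homology class, and converting its dependence on $n$ into a ``piecewise constant jump locus'' governed by the asymptotic slope requires genuine bookkeeping of the bigradings in the wrapped Floer complex (this is where the bulk of the work in \cite[\S6]{LewZib22} lives). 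Finally, attributing the $\lceil\sigma_c\rceil$-versus-$\lfloor\sigma_c\rfloor$ asymmetry to unspecified ``orientation and grading conventions'' flags the right subtlety but does not resolve it. In short, this is a plausible roadmap toward the cited result, not a proof; for the purposes of the present paper, one simply cites \cite[Corollary 6.14]{LewZib22}.
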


Since the curve $\wt{\BN}(Q_n)$ lifts to a curve that is isotopic to a line of slope $n$, the above two propositions reduce the proof of \cref{conj} to proving that $\wt{\BN}_a(T_K; \F_c) = \wt{\BN}(Q_0)$, up to grading shift.  Consider the Bar-Natan homology of the 0-closure $T_K(0)$. Since $T_K$ is obtained by taking the union of a long knot with its Seifert push-off, the closure $T_K(0)$ has linking number 0. Thus, by \Cref{lem:towHomGrad}, the Bar-Natan homology $\wt{\BN}(T_K(0); \F_c)$ has both $\F_c[H]$ towers in grading 0. We may compute this homology using \cref{thm:lagPair}:

\[\begin{split}
	\wt{\BN}(T_K(0))\{-1\}
		&\iso \HF\left(\wt{\BN}(m\oRes), \wt{\BN}(T_K)\right)\\
		&\iso \HF\left(\wt{\BN}(m\oRes), \wt{\BN}_a(T_K)[h]\{q\}\right) \oplus \mathrm{Tors}\\
		&\iso \wt{\BN}(T(2, 2n); \F_c)[h]\{q\} \oplus \mathrm{Tors},
\end{split}\]
where $\mathrm{Tors}$ is a torsion $\F_c[H]$-module, $T(2, 2n)$ is the $(2, 2n)$-torus link and $[h]\{q\}$ is a possible bigrading shift. Clearly both towers of $\wt{\BN}(T_K(0))$ sit in a summand of the homology that is isomorphic to $\wt{\BN}(T(2,2n))$, up to a grading shift. But the homology of 2-strand torus links is well understood --- indeed, we will indicate how to compute it in the next section. In particular, the only way for both towers of $\wt{\BN}(T(2,2n))$ to be in the same homological grading is if $n = 0$. This completes the proof.  \qed \hspace{-3em}

\section{Epilogue}\label{sec:epilogue}

Let us now indicate how to compute $\wt{\BN}(T(2, n); \F_c)$, using a technique that applies more generally and that is the honest source of the proof above. To that end, we will need to look under the hood of \cref{thm:lagPair} and use the bigraded type D structures $\Rd(Q_n; \F_c) \in \cat{Mod}^\mc{B}$. First, we will write $\k$ instead of $\F_c$ in what follows, since the characteristic does not matter and clutters the notation.

\begin{defn} The Bar-Natan algebra $\mc{B}$ is the bigraded path algebra over $\k$ of the quiver
\[\begin{tikzcd}
	\bullet \ar[loop left, "D_\bullet"] \rar[bend right, "S_\bullet"']	&\circ \ar[l, bend right, "S_\circ"'] \ar[loop right, "D_\circ"]
\end{tikzcd},\]
subject to the relations
\[D_\circ S_\bullet = S_\bullet D_\bullet = 0 \hspace{0.5cm}\text{ and } \hspace{0.5cm} D_\bullet S_\circ = S_\circ D_\circ = 0,\]
and with bigrading given by
\[q(1_\odot) = 0, \quad q(S_\odot) = -1, \quad q(D_\odot) = -2, \quad h(1_\odot) = h(S_\odot) = h(D_\odot) = 0,\]
where $\odot \in \{\circ, \bullet\}.$
\end{defn}

\begin{rmk} Alternatively, consider the quiver above as describing an additive category with two objects and with four non-identity morphisms indicated, and suppose that the composites $DS$ and $SD$ vanish. Then the algebra $\mc{B}$ is the collection of all morphisms of this category, where the algebra operation corresponds to composition of morphisms, and we formally set the composite of non-composable morphisms to 0. This is a bigraded category in the sense of Bar-Natan \cite{BN05}.
\end{rmk}

\begin{rmk} By definition, path algebras have idempotent elements $1_\odot$: the constant paths at each vertex. These correspond to identity morphisms in the categorical perspective. The idempotents generate a subring $\mc{I} := \k \gp{1_\circ, 1_\bullet} \iso \k^2$, giving $\mc{B}$ the additional structure of an $\mc{I}$-algebra.
\end{rmk}

Now a type D structure over $\mc{B}$ is, by definition, an $\mc{I}$-module $M$ together with a map $\delta \co M \ra  M \otimes_\mc{I} \mc{B}$ subject to an appropriate ``$d^2=0$" condition:
\[ ( \mathrm{Id}_M \otimes m ) \circ (\delta \otimes \mathrm{Id}_\mc{B}) \circ \delta = 0.\]

\begin{notn} Type D structures are described as labelled directed graphs, with vertices labelled by  $\bullet$ or $\circ$, and edges labelled with elements of $\mc{B}$. The vertices correspond to homogeneous generators (with respect to the action of $\mc{I}$) and the edges are the homogeneous components of the differential $\delta$. To avoid heavy use of brackets, we denote homological and quantum shifts by subscripts and left-superscripts, respectively. For example
\[\prescript{q}{}{\bullet_h}\]
is a type D structure generator fixed by $1_\bullet$ and in (homological, quantum)-bigrading $(h, q)$.
\end{notn}

The $\Rd$-invariants of $Q_n$ are explicitly computed as Example 4.27 of \cite{KWZ19} (where $Q_n$ is oriented compatibly with the 0-closure): $\Rd(Q_0) = \prescript{0}{}{\bullet_0}$ and, more generally,
\[\Rd(Q_n; k) =
\begin{cases}
	\underbrace{\begin{tikzcd}[cramped, sep = small, ampersand replacement = \&]
			\prescript{3n-1}{}{\circ_{n}} \rar["X"]	\&\cdots \rar["D"]	\&\circ\rar["SS"]	\&\circ\rar["D"]	\& \circ \rar["S"] \& \prescript{n}{}{\bullet_0}
		\end{tikzcd}}_{-n+1}		& \text{ if } n < 0\\
	\underbrace{\begin{tikzcd}[cramped, sep = small, ampersand replacement = \&]
			\prescript{n}{}{\bullet_0} \rar["S"]	\&\circ \rar["D"]	\&\circ\rar["SS"]	\&\circ\rar["D"]	\& \cdots \rar["X"]	\&\prescript{3n-1}{}{\circ_n}
		\end{tikzcd}}_{n+1}		& \text{ if } n > 0,\\
\end{cases}\]
where the algebra element $X$ is $D$ if $n$ is even and $SS$ if $n$ is odd.

Finally, the following element is defined in $\mc{B}$:
\[H := \mathit{SS}_\bullet - D_\bullet + \mathit{SS}_\circ - D_\circ.\]
This gives the Bar-Natan algebra the structure of a $\k[H]$-algebra, and, by design, this structure is compatible with the $\k[H]$-module structure of Bar-Natan homology:

\begin{thm}[\!\! {\cite[Proposition 4.31]{KWZ19}}] \label{thm:hom} Let $T_1$ and $T_2$ be two pointed oriented 4-ended tangles. Then there is a homotopy
\begin{equation} \label{eq:hom} 
\wt{\CBN}(\mc{L}; \k)\{-1\} \htp \Mor(\Rd(mT_1; \k), \Rd(T_2; \k))
\end{equation}
of bigraded chain complexes of $\k[H]$-modules, where $m$ denotes the mirror, and the bifunctor $\Mor(-, -)$ above is the internal Hom in the category of bigraded type D structures.
\end{thm}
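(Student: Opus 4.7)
The plan is to reduce the statement to Bar-Natan's planar gluing theorem \cite{BN05} by identifying the ``4-ended fragment'' of the Bar-Natan cobordism category with the path algebra $\mc{B}$, so that formal tangle complexes become type D structures and planar composition becomes the internal $\Mor$ bifunctor. The starting input is Bar-Natan's formal chain complex $C(T)$ attached to an oriented tangle $T$, which lives in the additive closure of the dotted cobordism category modulo the sphere, torus, and four-tube relations, and which satisfies a tautological planar gluing formula
\[ C(\mc{L}(T_1,T_2)) \htp C(T_1) \otimes_{\mathsf{Cob}_4} C(T_2); \]
applying the dotted Bar-Natan TQFT to the left-hand side and reducing at a basepoint recovers $\wt{\CBN}(\mc{L};\k)\{-1\}$.

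The main algebraic step is to identify the full subcategory of $\mathsf{Cob}_4$ generated by the two planar tangles $\oRes$ and $\iRes$ with the additive category whose morphism algebra is $\mc{B}$. The two objects give the idempotents $1_\bullet, 1_\circ$; the non-trivial morphisms come from the two saddle cobordisms between $\oRes$ and $\iRes$ (giving $S_\bullet, S_\circ$) and from the two ``dotted identity'' cobordisms (giving $D_\bullet, D_\circ$). The relations $DS = SD = 0$ follow from the sphere relation applied after decomposing a saddle-then-dot cobordism, while the stated bigrading reflects the fact that saddles carry quantum shift $-1$ and dots quantum shift $-2$, both being homologically trivial. Under this identification, the cube-of-resolutions complex $C(T)$, which by construction already lives in the span of $\oRes$ and $\iRes$, is exactly the type D structure $\Rd(T;\k)$.

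With this dictionary in hand, the gluing formula translates into an algebraic statement about type D structures. The tensor product $C(T_1) \otimes_{\mathsf{Cob}_4} C(T_2)$ corresponds to a bimodule tensor product of $\Rd(T_1)$ and $\Rd(T_2)$ over $\mc{B}$; the cobordism-level gluing matches a saddle on the $T_1$-side with the ``opposite'' saddle on the $T_2$-side, which is precisely the effect of replacing $T_1$ with its mirror $mT_1$. Using the standard ``dualize-and-take-Mor'' rewriting of a tensor product of type D structures, one then obtains a homotopy equivalence
\[ C(T_1) \otimes_{\mathsf{Cob}_4} C(T_2) \htp \Mor(\Rd(mT_1;\k), \Rd(T_2;\k)), \]
which proves \cref{eq:hom} as a statement of bigraded chain complexes. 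The $\k[H]$-action is compatible because the element $H = \mathit{SS}_\bullet - D_\bullet + \mathit{SS}_\circ - D_\circ \in \mc{B}$ lifts to the $H$-action on the Bar-Natan complex coming from the Frobenius algebra $\k[X]/(X^2 - HX)$.

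The main obstacle, in my view, lies in the bookkeeping of grading conventions and of the $H$-action. The cobordism-theoretic picture is formal, but the quantum shift $\{-1\}$, the reduction at the marked point, and the fact that the $\k[H]$-action on $\Mor(\Rd(mT_1), \Rd(T_2))$ is the one induced by the central element $H \in \mc{B}$ (rather than by multiplication on either factor separately) all have to be tracked with care; this is the content of the detailed setup in \cite[\S3--4]{KWZ19}, and it is where any honest proof would have to do actual work.
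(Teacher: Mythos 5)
This theorem is quoted verbatim as Proposition 4.31 of \cite{KWZ19}; the paper offers no proof of its own, so there is nothing in-paper to compare against. As a reconstruction of the argument in \cite{KWZ19}, your outline has the right architecture: apply Bar-Natan's planar gluing to formal tangle complexes, deloop the 4-ended cobordism category down to the quiver algebra $\mc{B}$, identify $C(T)$ with $\Rd(T;\k)$, and convert the planar pairing into the internal $\Mor$ pairing with a mirror on one factor. Two places deserve scrutiny. First, your derivation of the quiver relations $DS = SD = 0$ from ``the sphere relation applied after decomposing a saddle-then-dot cobordism'' is too quick: those relations are a feature of the \emph{reduced} setting and rely on the basepoint $\ast$ via the convention that a dot on the marked arc is killed, together with neck-cutting and the other dot relations; the sphere relation alone, applied to the unreduced 4-ended cobordism category, yields a strictly larger algebra than $\mc{B}$. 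Second, the step ``dualize-and-take-Mor'' is the actual crux: one must verify that dualizing $\Rd(T_1)$ is exactly what mirroring $T_1$ does to the tangle invariant, that the resulting $\Mor$ complex carries the right bigrading (including the $\{-1\}$ coming from the basepoint reduction), and that the $\k[H]$-action induced by the central element $H = \mathit{SS}_\bullet - D_\bullet + \mathit{SS}_\circ - D_\circ$ matches the one on $\wt{\CBN}$. You rightly flag this bookkeeping as the substance of the proof; it is precisely what \cite[\S 2--4]{KWZ19} carry out in detail.
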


The type D structure of $\Mor(\Rd_1, \Rd_2)$ is defined in \cite[\S2]{KWZ19}. Briefly, $\Mor(\Rd_1, \Rd_2)$ consists of all morphisms $\Rd_1 \ra \Rd_2$, not just the grading preserving ones. Given generators $x_i \in \Rd_i$ the quantum and homological grading of a morphism is given by
\[\gr(x_1 \xra{f} x_2) = \gr(x_2) - gr(x_1) + \gr(f).\]
Finally, a differential $D$ on $\Mor(\Rd_1, \Rd_2)$ is given on morphisms between generators by pre- and post-composing with the $\delta_i$ differentials on $\Rd_i$:
\[D(x_1 \xra{f} x_2) = f \circ \delta_1 -  \delta_2 \circ f.\]
For our purposes, note the following computations:
\[
\Mor(\prescript{i}{}{\bullet_j}, \prescript{k}{}{\circ_l}) 	=  \k[H]\gp{\prescript{i}{}{\bullet_j} \xra{S_\bullet} \prescript{k}{}{\circ_l}} \iso 	\prescript{k-i-1}{}{\big(\k[H]\big)_{l-j}}
\]
\[ 
\Mor(\prescript{i}{}{\bullet_j}, \prescript{k}{}{\bullet_l}) 	=	\k[H]\gp{\prescript{i}{}{\bullet_j} \xra{1_\bullet} \prescript{k}{}{\bullet_l}, \prescript{i}{}{\bullet_j} \xra{D_\bullet} \prescript{k}{}{\bullet_l}} 	\iso 	\prescript{k-i}{}{\big(\k[H]\big)_{l-j}} \oplus \prescript{k-i-2}{}{\big(\k[H]\big)_{l-j}}
\]

To give the simplest application of \cref{thm:hom}, the unknot $U$ is $\mc{L}(\oRes, \iRes)$. We have thus
\[\wt{\BN}(U)\{-1\} \iso H_* \left[ \Mor(\prescript{0}{}{\bullet_0}, \prescript{0}{}{\circ_0}) \right] \iso \prescript{-1}{}{\big(\k[H]\big)_0}.\]
		
Now we can give a rapid computation of $\wt{\BN}(T(2,n)) = \wt{\BN}(\mc{L}(\oRes, Q_n))$. If $n < 0$, then
\[\begin{split}
\wt{\BN}(T(2,n))\{-1\} 
&\iso H_*\left[ 
	\Mor(\prescript{0}{}{\bullet_0}, 
		\begin{tikzcd}[cramped, sep = small, ampersand replacement = \&]
			\prescript{3n-1}{}{\circ_n} \rar["X"]	\&\circ \rar["D"]	\&\circ\rar["SS"]	\&\cdots \rar	\&\prescript{n}{}{\bullet_0}
		\end{tikzcd})
		\right]\\
&\iso H_* \left[
	\begin{tikzcd}[cramped, sep = small, ampersand replacement = \&]
		\Mor(\prescript{0}{}{\bullet_0},\prescript{3n-1}{}{\circ_n}) \rar["X_*"]	\&\Mor(\prescript{0}{}{\bullet_0}, \circ) \rar["D_*"]	\&\Mor(\prescript{0}{}{\bullet_0},\circ)\rar["SS_*"]	\& \cdots \rar	\&\Mor(\prescript{0}{}{\bullet_0}, \prescript{n}{}{\bullet_0})
	\end{tikzcd}
	\right],
\end{split}\]
where the maps above are the ones induced by postcomposing with the components of the differential on $\Rd(Q_n)$. It is convenient to organize the above complex in a grid as follows:
\begin{center}
\begin{tikzpicture}
	\draw[step=1cm,gray,very thin] (-5.5,-5.5) grid (-2.5,-2.5);
	\draw[step=1cm,gray,very thin] (-5.5,-1.5) grid (-2.5, 4.5);
	\draw[step=1cm,gray,very thin] (-1.5,-5.5) grid (4.5, -2.5);
	\draw[step=1cm,gray,very thin] (-1.5,-1.5) grid (4.5,4.5);
	\draw[thick, ->] (-5.5, -5) -- (4.5, -5);
	\draw[thick, ->] (-5, -5.5) -- (-5, 4.5);
	 \foreach \x in {-4.5,-3.5}
   		\draw (\x, \x) node {$\k[H]$};
	\draw[thick, ->] (-4.25, -4.25) -- (-3.75, -3.75);
	\draw (-4, -4) node[anchor=south east] {$X_*$};
	\draw (-2, -2) node {$\iddots$};
	\foreach \x in {-0.5, 0.5, 1.5, 2.5, 3.5}
		\draw (\x, \x) node {$\k[H]$};
	\draw (3.68, 2.5) node {$\k[H]$};
	\foreach \x in {-1.25, 0.75, 2.75}
	{
		\draw[thick, ->] (\x,\x) -- +(0.5, 0.5);
		\draw (\x,\x) ++(0.25, 0.25) node[anchor=south east] {$\times H$};
	}
	\draw[thick, ->] (2.9, 2.5) -- (3.26, 2.5);
	\draw (3.07, 2.5) node[anchor=south] {$1$};
	\foreach \x in {-4, -3, -2, -1, 0}
		\draw (\x, -5.4) ++(3.5, 0) node {\x};
	\draw (-4.5, -5.4) node {$n$};
	\draw (-3.5, -5.4) node {$n+1$};
	\draw (-5.6, 3.5) node {$n$};
	\draw (-5.6, 2.5) node {$n-2$};
	\draw (-5.6, 1.5) node {$n-4$};
	\draw (-5.6, 0.5) node {$n-6$};
	\draw (-5.6, -0.5) node {$n-8$};
	\draw (-5.6, -3.5) node {$3n$};
	\draw (-5.65, -4.5) node {$3n-2$};
\end{tikzpicture}
\end{center}

Where the horizontal and vertical axes measure the homological and quantum grading, respectively, and where only the nonzero components of the differential are indicated. These components are easy to compute: every morphism group, except for the last one, is generated over $\k[H]$ by an $S_\bullet$, which $D_*$ takes to 0 and $SS_*$ takes to $SSS = HS$. The last morphism group is generated by $1_\bullet$ and $D_\bullet$ and the incoming differential is $S_\bullet \mapsto S_\circ S_\bullet = H1_\bullet + D_\bullet$. 

Taking homology of the above bigraded complex of free $\k[H]$-modules yields $\wt{\BN}(T(2,n);\k)$. In particular, when $n$ is even, the two towers are in homological grading $n$ and $0$, in accordance with \Cref{lem:towHomGrad}. The computation for $n\geq0$ is analogous.

\bibliographystyle{alphaurl}
\bibliography{Remark_on_LewZib.bbl} 

\end{document}